\newcommand{\mb}[1]{\mathbb{#1}}
\newcommand{\mf}[1]{\mathbf{#1}}
\newcommand{\Cay}{\operatorname{Cay}}
\newcommand{\oE}{\mf{\operatorname{E}}}
\newcommand{\G}{\mb{G}}
\newcommand{\R}{\mb{R}}
\newcommand{\Z}{\mb{Z}}
\newcommand{\Rep}{P}
\newcommand{\isomorphic}{\simeq}
\newcommand{\To}{\rightarrow}
\newtheorem{Lemma}{Lemma}
\newtheorem{Theorem}{Theorem}
\newtheorem{Conjecture}{Conjecture}
\newtheorem{Definition}{Definition}
\newtheorem{Question}{Question}
\newenvironment{proofof}[1]{\noindent\emph{Proof of #1.}}{\qed}
\title{Twin bent functions, strongly regular Cayley graphs, and Hurwitz-Radon theory}
\author{
Paul~Leopardi
\thanks{University of Melbourne.
\protect\url{mailto:paul.leopardi@gmail.com}}
}
\date{Resubmitted to JACODES Math: 16 April 2017}
\begin{document}

\maketitle

\begin{abstract}
The real monomial representations of Clifford algebras
give rise to two sequences of bent functions.
For each of these sequences, the corresponding Cayley graphs are
strongly regular graphs, and the corresponding sequences of strongly regular graph parameters coincide.
Even so, the corresponding graphs in the two sequences are not isomorphic, except in the first 3 cases.
The proof of this non-isomorphism is a simple consequence of a theorem of Radon.
\end{abstract}

\section{Introduction}
\label{sec-Introduction}
Two recent papers \cite{Leo14Constructions,Leo15Twin} describe and investigate two infinite sequences of bent functions and their Cayley graphs.
The bent function $\sigma_m$ on $\Z_2^{2 m}$ is described in the first paper \cite{Leo14Constructions}, on
generalizations of Williamson's construction for Hada\-mard matrices.
The bent function $\tau_m$ on $\Z_2^{2 m}$ is described in the second paper \cite{Leo15Twin},
which investigates some of the properties of the two sequences of bent functions.
In this second paper it is shown that the bent functions $\sigma_m$ and $\tau_m$ both correspond to Hada\-mard difference sets with the same parameters
\begin{align*}
(v_m,k_m,\lambda_m,n_m) &= (4^m, 2^{2 m - 1} - 2^{m-1}, 2^{2 m - 2} - 2^{m-1}, 2^{2 m - 2}),
\end{align*}
and that their corresponding Cayley graphs are both strongly regular with the same parameters $(v_m,k_m,\lambda_m,\lambda_m)$.

The main result of the current paper is the following.
\begin{Theorem}\label{HR-non-imomorphic-theorem}
The Cayley graphs of the bent functions $\sigma_m$ and $\tau_m$ are isomorphic only when $m=1, 2,$ or $3.$
\end{Theorem}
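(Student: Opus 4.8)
The plan is to distinguish the two graphs by their \emph{clique numbers}. Since $\Cay(\sigma_m)$ and $\Cay(\tau_m)$ are strongly regular with the same parameters they have the same spectrum, so a separating invariant must be one that is \emph{not} determined by those parameters; the clique number is the natural candidate. Write $\omega(\Gamma)$ for the clique number of a graph $\Gamma$. Because the third and fourth strongly-regular parameters of these graphs agree, their non-principal eigenvalues are $\pm\sqrt{k_m-\lambda_m}=\pm 2^{m-1}$, and the Delsarte--Hoffman clique bound for a $k_m$-regular graph with least eigenvalue $-2^{m-1}$ gives
\[
\omega\bigl(\Cay(\sigma_m)\bigr),\ \omega\bigl(\Cay(\tau_m)\bigr)\ \le\ 1+\frac{k_m}{2^{m-1}}\ =\ 2^m .
\]
So it suffices to show that, for $m\ge 4$, one of the two graphs attains this bound while the other does not.

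For one of them --- say $\Cay(\sigma_m)$, interchanging the roles of $\sigma_m$ and $\tau_m$ if necessary --- I would exhibit an explicit clique on $2^m$ vertices, forcing $\omega(\Cay(\sigma_m))=2^m$. By vertex-transitivity it is enough to build such a clique through the identity vertex, i.e.\ to find an $m$-dimensional $\Z_2$-subspace $U\subseteq\Z_2^{2m}$ all of whose non-zero vectors lie in the support of $\sigma_m$ --- equivalently, a maximal ``amicable'' configuration among the monomial matrices that comes straight out of the Williamson-type construction of \cite{Leo14Constructions}. The $2^m$ vectors of $U$ are then pairwise adjacent.

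The heart of the matter is the matching upper bound for the other graph. Identify the $4^m$ vertices of $\Cay(\tau_m)$ with the signed basis monomial matrices of the real monomial representation $\Rep$ of degree $2^m$, so that $g$ and $h$ are adjacent precisely when $g+h$ lies in the support of $\tau_m$. Each $\Rep(g)$ is an orthogonal monomial matrix, hence either symmetric with $\Rep(g)^2=I$ or skew-symmetric with $\Rep(g)^2=-I$; combining this dichotomy with $\Rep(g)\Rep(h)=\pm\Rep(g+h)$ and the description of the support of $\tau_m$, one finds that a clique through the identity vertex corresponds exactly to a family $\Rep(g_1),\dots,\Rep(g_r)$ of skew-symmetric orthogonal matrices that pairwise \emph{anticommute}. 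Radon's theorem (the Hurwitz--Radon theorem) says that the largest such family has $r=\rho(2^m)-1$ members, where $\rho$ is the Hurwitz--Radon function; moreover a family of that size actually occurs inside $\Rep$, since $\Rep$ restricts to a faithful representation of the relevant Clifford algebra. Re-inserting the identity vertex gives $\omega(\Cay(\tau_m))=\rho(2^m)$.

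It remains to compare the two values. One has $\rho(n)\le n$ with equality exactly for $n\in\{1,2,4,8\}$, so $\rho(2^m)=2^m$ when $m=1,2,3$ and $\rho(2^m)<2^m$ when $m\ge 4$. Hence for $m\ge 4$ the clique numbers differ and the graphs cannot be isomorphic, whereas for $m=1,2,3$ one confirms the isomorphism directly: for these small values the bent functions are (affinely) equivalent, so an explicit isomorphism $\Cay(\sigma_m)\isomorphic\Cay(\tau_m)$ can be written down, and in any case the claim reduces to a finite check. The step I expect to be the real obstacle is the dictionary in the preceding paragraph --- recognising the cliques of $\Cay(\tau_m)$ as precisely the Hurwitz--Radon families of pairwise anticommuting skew-symmetric matrices inside $\Rep$ --- together with verifying that $\Cay(\sigma_m)$ genuinely carries a clique of size $2^m$. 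Once that framework is in place, Radon's theorem closes the argument immediately.
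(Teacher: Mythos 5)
Your proposal is correct and is essentially the paper's own argument: bound the clique number of one graph by $\rho(2^m)$ via Radon's theorem on Hurwitz--Radon families of pairwise anticommuting skew orthogonal matrices, exhibit an explicit clique of order $2^m$ in the other graph (the additively closed set of vectors whose base-4 digits all lie in $\{0,2\}$), and conclude from $\rho(2^m)<2^m$ for $m\geqslant 4$. The only adjustment needed is the one you already flagged as ``interchanging if necessary'': in the paper's conventions $\sigma_m$ marks the \emph{skew} matrices and $\tau_m$ the symmetric non-diagonal ones, so the Hurwitz--Radon upper bound applies to $\Cay(\sigma_m)$ and the explicit $2^m$-clique lives in $\Cay(\tau_m)$, the reverse of your tentative labelling (your added Delsarte--Hoffman observation and the exactness claims for the clique numbers are harmless extras not needed for the conclusion).
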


The remainder of the paper is organized as follows.
Section \ref{sec-Background} outlines some of the background of this investigation.
Section \ref{sec-Preliminaries} includes further definitions used in the subsequent sections.
Section~\ref{sec-Results} proves the main result, and resolves the conjectures and the question raised by the previous papers.
Section~\ref{sec-Discussion} puts these results in context, and suggests future research.

\section{Background}\label{sec-Background}
A recent paper of the author \cite{Leo14Constructions} describes a generalization of
Williamson's construction for Hada\-mard matrices \cite{Wil44}
using the real monomial representation of the basis elements of the Clifford algebras $\R_{m,m}$.

Briefly, the general construction uses some
\begin{align*}
\quad &A_k \in \{-1,0,1\}^{n \times n}, \quad B_k \in \{-1,1\}^{b \times b},
\quad k \in \{1,\ldots,n\},
\end{align*}
where the $A_k$ are \emph{monomial} matrices,
and constructs
\begin{align}
H &:= \sum_{k=1}^n A_k \otimes B_k,
\tag{H0}
\end{align}
such that
\begin{align}
H \in \{-1,1\}^{n b \times n b}
\quad
\text{and}
\quad
H H^T &= n b I_{(n b)},
\tag{H1}
\end{align}
i.e. $H$ is a Hada\-mard matrix of order $n b$.
The paper \cite{Leo14Constructions} focuses on a special case of the construction,
satisfying the conditions
\begin{align}
 A_j \ast A_k = 0 \quad (j \neq k)&, \quad \sum_{k=1}^n A_k \in \{-1,1\}^{n \times n},
\notag
\\
 A_k A_k^T &= I_{(n)},
\notag
\\
 A_j A_k^T + \lambda_{j,k} A_k A_j^T &= 0 \quad (j \neq k),
\label{constructions-4}
\\
 B_j B_k^T - \lambda_{j,k} B_k B_j^T &= 0 \quad (j \neq k),
\notag
\\
 \lambda_{j,k} &\in \{-1,1\},
\notag
\\
\sum_{k=1}^n  B_k B_k^T &= n b I_{(b)},
\notag
\end{align}
where $\ast$ is the Hada\-mard matrix product.

In Section 3 of the paper  \cite{Leo14Constructions},
it is noted that the Clifford algebra $\R^{2^m \times 2^m}$ has a canonical basis consisting of $4^m$ real monomial matrices,
corresponding to the basis of the algebra $\R_{m,m}$, with the following properties:

Pairs of basis matrices either commute or anticommute.
Basis matrices are either symmetric or skew,
and so the basis matrices $A_j, A_k$ satisfy
\begin{align}
 A_k A_k^T &= I_{(2^m)},
\quad
 A_j A_k^T + \lambda_{j,k} A_k A_j^T = 0 \quad (j \neq k),
\quad
\lambda_{j,k} \in \{-1,1\}.
\label{A-property-1}
\end{align}

Additionally, for $n=2^m$, we can choose a transversal of $n$ canonical basis matrices that
satisfies conditions \eqref{constructions-4} on the $A$ matrices,
\begin{align}
 A_j \ast A_k = 0 \quad (j \neq k)&,
\quad
\sum_{k=1}^n A_k \in \{-1,1\}^{n \times n}.
\label{A-property-2}
\end{align}

Section 3 also contains the definition of $\varDelta_m$, the restricted amicability / anti-amicability graph of $\R_{m,m}$,
and the subgraphs $\varDelta_m[-1]$ and $\varDelta_m[1]$, as well as the term ``transversal graph''.
These definitions are repeated here since they are used in the conjectures and question below.
\begin{Definition}\label{definition-delta}
\cite[p. 225]{Leo14Constructions}

Let $\varDelta_m$ be the graph whose vertices are the $n^2=4^m$
positive signed basis matrices of the real representation
of the Clifford algebra $\R_{m,m}$,
with each edge having one of two labels, $-1$ or $1$:
\begin{itemize}
\item
Matrices $A_j$ and $A_k$ are connected by an edge labelled by $-1$ (``red'') if they have disjoint support and are anti-amicable,
that is, $A_j A_k^{-1}$ is skew.
\item
Matrices $A_j$ and $A_k$ are connected by an edge labelled by $1$ (``blue'') if they have disjoint support and are amicable,
that is, $A_j A_k^{-1}$ is symmetric.
\item
Otherwise there is no edge between $A_j$ and $A_k$.
\end{itemize}
The subgraph $\varDelta_m[-1]$ consists of the vertices of $\varDelta_m$ and all edges in $\varDelta_m$ labelled by $-1$.
Similarly, the subgraph $\varDelta_m[1]$ contains all of the edges of $\varDelta_m$ that are labelled by $1$.
\end{Definition}

A \emph{transversal graph} for the Clifford algebra $\R_{m,m}$
is any induced subgraph of $\varDelta_m$ that is a complete graph on $2^m$ vertices.
That is, each pair of vertices in the transversal graph represents a pair of matrices,
$A_j$ and $A_k$ with disjoint support.

The following three conjectures appear in Section 3 of the paper \cite{Leo14Constructions}:

\begin{Conjecture}\label{conjecture-1}
For all $m \geqslant 0$ there is a permutation $\pi$ of the set of $4^m$ canonical basis matrices,
that sends an amicable pair of basis matrices with disjoint support to an anti-amicable pair, and vice-versa.
\end{Conjecture}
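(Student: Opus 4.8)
The plan is to translate the statement into the language of Cayley graphs over $\Z_2^{2m}$ and then to produce the permutation $\pi$ as an explicit linear map. First I would index the $4^m$ canonical basis matrices by vectors $z \in \Z_2^{2m}$, writing $A_z$ for the positive-signed representative, so that $A_z A_z^T = I$ and hence $A_y^{-1} = A_y^T$. Then $A_x A_y^{-1} = A_x A_y^T = \pm A_{x+y}$, so whether $A_x, A_y$ have disjoint support, and whether they are amicable or anti-amicable, depends only on the difference $z = x+y$. Consequently both $\varDelta_m[1]$ and $\varDelta_m[-1]$ are Cayley graphs on $\Z_2^{2m}$: letting $D \subseteq \Z_2^{2m}$ be the set of indices of the $2^m$ diagonal basis matrices, and $\theta(z) \in \{0,1\}$ record the transpose parity via $A_z^T = (-1)^{\theta(z)} A_z$, the connection sets are $B_m = \{z \notin D : \theta(z)=0\}$ (blue, amicable) and $R_m = \{z \notin D : \theta(z)=1\}$ (red, anti-amicable), and these partition $\Z_2^{2m}\setminus D$. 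In this language the claim becomes: find $L \in \mathrm{GL}(2m,\Z_2)$ with (i) $L(D)=D$ and (ii) $L(R_m)=B_m$ (equivalently $\theta(Lz)=\theta(z)+1$ for all $z \notin D$). Indeed $\pi(x):=Lx$ then sends the edge of difference $z$ to the edge of difference $Lz$; condition (i) maps edges to edges and non-edges to non-edges, while (ii) interchanges the two labels, which is exactly what the statement demands.

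The next step is to make $\theta$ and $D$ explicit through the recursive tensor structure $\R_{m,m} \cong \R_{m-1,m-1} \otimes \R_{1,1}$, under which each basis matrix is a tensor product of $2\times2$ factors from a fixed alphabet. This exhibits $\theta$ as a quadratic function on $\Z_2^{2m}$ and $D$ as the subset on which the product is diagonal, and it recovers $\abs{R_m}=\abs{B_m}=2^{2m-1}-2^{m-1}=k_m$, the balance already forced by the coincidence of the strongly regular parameters. Since a translation $z \mapsto z+w$ fixes every difference, it can never swap colors; the permutation must therefore carry a genuine linear part $L$. A convenient source of candidates is the group of basis-permuting automorphisms of $\R_{m,m}$, which automatically preserve $D$ (condition (i)); among these I would search for one that flips the transpose parity off the diagonal. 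It is worth stressing, however, that only conditions (i) and (ii) are logically needed, so I am free to look beyond algebra automorphisms if necessary.

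For the linear part I would first try the structural dualities of $\R_{m,m}$ itself: the generator exchange $e_i \leftrightarrow f_i$ between the positive and negative halves, possibly composed with a single coordinate twist, gives a basis-permuting map fixing $D$ whose effect on $\theta$ can be tracked factor by factor through the tensor decomposition. The verification then reduces to a parity computation in each $2\times2$ slot, assembled across the $m$ slots. The hard part will be uniformity in $m$: because the sign of transposition (reversal) on a product of $k$ generators has a four-fold periodicity in $k$, a single fixed recipe typically shifts $\theta$ by an amount that depends on $m \bmod 4$, so one uniform $L$ is unlikely to flip the parity for every $m$ simultaneously. I would address this inductively — given a color-swapping $L_{m-1}$ on $\R_{m-1,m-1}$, extend it across the new $\R_{1,1}$ factor by a $2\times2$ correction chosen to cancel the parity contributed by the new pair of generators — and let the recipe depend on $m \bmod 4$. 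The decisive step, which I expect to be the genuine obstacle, is checking that at every residue class the required local parity correction can be made while still keeping $D$ invariant; this compatibility is precisely the point on which the whole construction turns.
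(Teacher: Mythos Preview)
Your plan attempts to \emph{prove} the conjecture, but in the paper this statement is shown to be \emph{false} for $m\geqslant 4$; the paper's ``proof'' is a disproof.  The obstruction is exactly a clique-number mismatch between the two Cayley graphs you set up.  In your notation, the blue graph $\Cay(\Z_2^{2m},B_m)\isomorphic\varDelta_m[1]$ contains a clique of size $2^m$ (take all $z$ whose base-$4$ digits lie in $\{0,2\}$, excluding $0$; these index symmetric non-diagonal basis matrices and are closed under addition).  By contrast, in the red graph $\Cay(\Z_2^{2m},R_m)\isomorphic\varDelta_m[-1]$, any clique through $0$ corresponds to a set $\{A_1,\ldots,A_s\}$ of skew, pairwise anti-amicable orthogonal matrices of order $2^m$; since $A_jA_k^T=-A_kA_j^T$ with $A_k^T=-A_k$ forces $A_jA_k=-A_kA_j$, this is a Hurwitz--Radon family, and Radon's theorem gives $s\leqslant\rho(2^m)-1$.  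Hence the red clique number is at most $\rho(2^m)$, which is strictly less than $2^m$ once $m\geqslant 4$.

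Any permutation $\pi$ of the basis matrices satisfying your conditions (i) and (ii) would in particular be a graph isomorphism $\varDelta_m[1]\to\varDelta_m[-1]$, carrying the blue $2^m$-clique to a red $2^m$-clique.  No such red clique exists for $m\geqslant 4$, so no such $\pi$ exists---linear or otherwise.  The ``decisive step'' you flagged (achieving the parity flip while keeping $D$ invariant) is therefore not merely delicate but impossible in general: whatever local $2\times2$ correction you apply in the inductive step, the resulting $L$ cannot swap $R_m$ and $B_m$ once $m\geqslant 4$.  Your framework is fine for $m\leqslant 3$ (where the swap does exist), but the uniform-in-$m$ construction you are aiming for cannot succeed.
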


\begin{Conjecture}\label{conjecture-2}
For all $m \geqslant 0,$
for the Clifford algebra $\R_{m,m},$ the subset of transversal graphs that are
not self-edge-colour complementary
can be arranged into a set of pairs of graphs with each member of the pair
being edge-colour complementary to the other member.
\end{Conjecture}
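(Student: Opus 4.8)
The plan is to deduce Conjecture \ref{conjecture-2} from an involution argument, once the family of transversal graphs is shown to be closed under edge-colour complementation. Edge-colour complementation, which swaps the labels $-1$ and $1$ on every edge, is an involution on the set of isomorphism classes of edge-two-coloured complete graphs on $2^m$ vertices. Write $\mathcal{T}_m$ for the subset consisting of the classes of transversal graphs of $\R_{m,m}$, and write $T^{c}$ for the edge-colour complement of $T$. If $\mathcal{T}_m$ is closed under complementation, then its classes split into fixed points (the self-edge-colour complementary graphs) and orbits of size two; each size-two orbit is a pair $\{[T],[T^{c}]\}$ whose members are edge-colour complementary to one another, which is exactly the pairing asserted by the conjecture. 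Thus everything reduces to the single claim that the edge-colour complement of a transversal graph is again (isomorphic to) a transversal graph.

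To make this claim tractable I would first coordinatise. Using the canonical basis of the real representation of $\R_{m,m}$ one checks that each positive signed basis matrix is a signed translation $e_a \mapsto \pm e_{a+w}$ of $\mathbb{R}^{2^m}$ indexed by a pair $(w,c) \in \Z_2^m \times \Z_2^m$, that two such matrices have disjoint support exactly when their $w$-parts differ, and that a transversal graph is recorded by a choice function $c \colon \Z_2^m \To \Z_2^m$ selecting, for each permutation part $w$, which representative is taken. A short computation with transposes then gives the colour between the vertices $x$ and $y$ of the transversal as
\[
\kappa_c(x,y) \;=\; (-1)^{\langle c(x)+c(y),\, x+y\rangle},
\]
recasting the whole problem over $\Z_2$. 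In these terms, closure under complementation says: for every $c$ there exist a function $c'$ and a bijection $\pi$ of $\Z_2^m$ with $\kappa_{c'}(\pi x,\pi y) = -\kappa_c(x,y)$ for all $x \neq y$.

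The heart of the matter is that the relabelling $\pi$ must be used and must be genuinely nonlinear. Indeed, if the complement could always be realised with $\pi$ the identity — or, as a short computation shows, with $\pi$ any fixed affine map — then setting $d = c'+c$ produces a fixed $d \colon \Z_2^m \To \Z_2^m$ with $\langle d(x)+d(y),\, x+y\rangle = 1$ for all $x \neq y$, and the permutation $A_{w,c}\mapsto A_{w,\,c+d(w)}$ is then a global colour-swapping automorphism of $\varDelta_m$; that is, it is exactly the object of Conjecture \ref{conjecture-1} and identifies $\varDelta_m[1]$ with $\varDelta_m[-1]$. As these are the two Cayley graphs of Theorem \ref{HR-non-imomorphic-theorem}, no such map exists once $m \geqslant 4$. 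So the isomorphisms realising the complement are forced to depend on the transversal and to be nonlinear, and constructing them is the main obstacle. My plan is to build them by induction on $m$ along the splitting $\Z_2^m = \Z_2^{m-1} \times \Z_2$: take the small cases as base (for $m \leqslant 3$ a single affine $\pi$ already works), feed the $(m-1)$-dimensional complementing map into dimension $m$, and use a nonlinear shear in the last coordinate to supply the global sign change that the linear obstruction forbids. The delicate point — and where I expect the real work to lie — is to choose this shear so that it flips \emph{every} colour simultaneously while keeping the image of $\pi$ a genuine section over the $w$-coordinates, i.e.\ a bona fide transversal; this is precisely the step where the Hurwitz--Radon obstruction is met and must be absorbed by the extra nonlinear freedom that relabelling provides.

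Granting closure, the proof concludes immediately: edge-colour complementation is a well-defined involution on $\mathcal{T}_m$, its two-element orbits furnish the required pairing of the transversal graphs that are not self-edge-colour complementary, and the self-edge-colour complementary graphs are exactly the fixed points that the statement sets aside.
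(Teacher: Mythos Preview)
Your proposal attempts to \emph{prove} Conjecture~\ref{conjecture-2}, but the paper in fact \emph{disproves} it (Theorem~\ref{Conjectures-are-false-theorem}): the conjecture fails for every $m\geqslant 4$. The step you identify as ``the heart of the matter''---closure of the set of transversal graphs under edge-colour complementation---is precisely the step that is false, and the Hurwitz--Radon bound is the reason it is false, not an obstruction to be ``absorbed.''

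Concretely: Lemma~\ref{Blue-clique-lemma} exhibits a clique of order $2^m$ in $\varDelta_m[1]$, which is therefore a transversal graph all of whose edges are blue. Its edge-colour complement would be a transversal graph all of whose edges are red, i.e.\ a clique of order $2^m$ in $\varDelta_m[-1]$. But Lemma~\ref{Red-clique-lemma} (Radon's theorem) bounds the clique number of $\varDelta_m[-1]$ by $\rho(2^m)$, and $\rho(2^m)<2^m$ once $m\geqslant 4$. Hence the all-blue transversal has no edge-colour complement among the transversal graphs, Conjecture~\ref{conjecture-3} fails, and with it Conjecture~\ref{conjecture-2}. In your coordinates, taking $c\equiv 0$ gives $\kappa_c(x,y)=1$ for all $x\neq y$; the complement you seek would require some $c'$ with $\kappa_{c'}(x,y)=-1$ for all $x\neq y$, and no amount of relabelling by a bijection $\pi$ can manufacture such a $c'$, because the existence of an all-red $2^m$-clique is an isomorphism invariant of $\varDelta_m[-1]$ and is ruled out by the Hurwitz--Radon bound. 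Your inductive plan therefore cannot succeed: the ``nonlinear shear'' you hope will supply the global sign change is being asked to produce a Hurwitz--Radon family of size $2^m-1$ in order $2^m$, which does not exist for $m\geqslant 4$.
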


\begin{Conjecture}\label{conjecture-3}
For all $m \geqslant 0,$
for the Clifford algebra $\R_{m,m},$ if a graph $T$ exists amongst the transversal graphs,
then so does at least one graph with edge colours complementary to those of $T$.
\end{Conjecture}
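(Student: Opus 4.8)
The plan is to deduce Conjecture~\ref{conjecture-3} from the stronger Conjecture~\ref{conjecture-1}, and to prove the latter by exhibiting an explicit colour-swapping permutation. First I would set up the standard algebraic model of $\varDelta_m$: identify the $4^m$ positive basis matrices with the vectors of $V = \Z_2^{2m}$, so that $A_u A_v = (-1)^{\beta(u,v)} A_v A_u$ for a nondegenerate symplectic form $\beta$, and so that the symmetric/skew dichotomy is recorded by a quadratic form $s\colon V \To \Z_2$ (with $s=0$ symmetric, $s=1$ skew) whose polarization is $\beta$. Two observations make the model usable. Since $A_v^{-1} = \pm A_v$, the product $A_u A_v^{-1}$ is, up to sign, $A_{u+v}$, so a disjoint-support pair $\{u,v\}$ is amicable iff $s(u+v)=0$ and anti-amicable iff $s(u+v)=1$; thus the edge colour depends only on $u+v$. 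Secondly, $A_u$ and $A_v$ have disjoint support exactly when $A_{u+v}$ is non-diagonal, i.e. when $u+v \notin \mathcal{D}$, where $\mathcal{D}$ is the $m$-dimensional subspace of diagonal basis elements. Consequently a transversal graph is precisely a set of coset representatives for $\mathcal{D}$ in $V$, its vertex set is a transversal of $V/\mathcal{D} \isomorphic \Z_2^m$, and the colour of the edge $\{u,v\}$ is $s(u+v)$.

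In this language Conjecture~\ref{conjecture-1} asks for a permutation $\pi$ of $V$ that preserves the adjacency relation $u+v \notin \mathcal{D}$ and satisfies $s(\pi u + \pi v) = 1 + s(u+v)$ on every edge; any such $\pi$ carries a transversal graph $T$ to another transversal graph whose colouring is the complement of that of $T$, which is exactly Conjecture~\ref{conjecture-3} (and, applied involutively, Conjecture~\ref{conjecture-2}). I would first try the most structured candidate, an affine map $\pi(v) = Lv + c$ with $L$ orthogonal for $\beta$ and $L(\mathcal{D}) = \mathcal{D}$. For such $L$ the forms $s$ and $s \circ L$ share the polarization $\beta$, hence differ by a linear functional $\ell$, and the colour-flip condition collapses to $\ell(w) = 1$ for all $w \notin \mathcal{D}$. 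For the weaker Conjecture~\ref{conjecture-3} one has extra freedom: rather than a global $\pi$, it suffices, for a given transversal, to re-choose representatives by a map $\delta\colon V/\mathcal{D} \To \mathcal{D}$, reducing the task to solving $s(e) + \beta(w,e) = 1$ along the relevant differences $w=u+v$, where $e = \delta(\bar u)+\delta(\bar v)$.

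The base cases $m \leqslant 3$ (in fact already $m=1$) can be checked by hand from the model above, and I would attempt the general case by induction along the Clifford periodicity $\R_{m+1,m+1} \isomorphic \R_{m,m} \otimes \R_{1,1}$, lifting a complementary transversal at level $m$ to level $m+1$ by combining it with a colour-swapping choice on the two new generators of $\R_{1,1}$ and tracking how $s$, $\beta$ and $\mathcal{D}$ behave under the tensor factor.

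The main obstacle is precisely the step where the linear candidate breaks down: a single $\beta$-preserving linear map changes $s$ only by a linear functional, and no linear functional can equal $1$ on all of $V \setminus \mathcal{D}$ once $m \geqslant 2$, since $\mathcal{D}$ has index $2^m > 2$. One is therefore forced into genuinely non-linear permutations, or into the per-transversal representative swaps of the second paragraph, and the delicate point becomes controlling the amicability type of products as the construction passes through the tensor factor $\R_{1,1}$. This is exactly where the Hurwitz--Radon periodicity of the symmetric/skew and commuting/anticommuting counts must be fed into the induction, and verifying that the colour-flip survives each tensoring step is the crux of the argument.
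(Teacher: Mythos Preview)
Your proposal is aimed at \emph{proving} Conjecture~\ref{conjecture-3}, but the paper shows that this conjecture is \emph{false} for all $m \geqslant 4$; it is not proved but refuted. The obstruction you noticed in your own argument (no linear functional can equal $1$ on all of $V \setminus \mathcal{D}$ once $m \geqslant 2$) is not a technical wrinkle to be worked around by non-linear tricks or per-transversal swaps: it is a shadow of a genuine invariant-level asymmetry between $\varDelta_m[-1]$ and $\varDelta_m[1]$.

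Concretely, the paper exhibits an all-blue transversal graph, namely the clique of order $2^m$ in $\varDelta_m[1]$ built from the base-4 vectors with digits in $\{0,2\}$ (Lemma~\ref{Blue-clique-lemma}). An edge-colour complement of this transversal would have to be an all-red clique of order $2^m$ in $\varDelta_m[-1]$. But a red clique containing the identity is, after translating by that vertex, exactly a Hurwitz--Radon family of skew orthogonal matrices of order $2^m$, and Radon's theorem (Lemma~\ref{Hurwitz-Radon-lemma}) caps the size of any such family at $\rho(2^m)-1$. Hence the clique number of $\varDelta_m[-1]$ is at most $\rho(2^m)$, which is strictly less than $2^m$ for $m \geqslant 4$ (Lemma~\ref{Red-clique-lemma}). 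So no colour-complement of the all-blue transversal exists, Conjecture~\ref{conjecture-3} fails, and with it Conjectures~\ref{conjecture-1} and~\ref{conjecture-2}. Your inductive scheme via $\R_{m+1,m+1} \simeq \R_{m,m} \otimes \R_{1,1}$ therefore cannot close: the Hurwitz--Radon periodicity you intended to ``feed into the induction'' is precisely what bounds the red clique number below $2^m$ and kills the conjecture.
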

Note that Conjecture \ref{conjecture-1} implies  Conjecture \ref{conjecture-2},
which in turn implies Conjecture \ref{conjecture-3}.

The significance of these conjectures can be seen in relation to the following result,
which is Part 1 of Theorem 10 of the paper \cite{Leo14Constructions}.

\begin{Lemma}\label{th-graph-image}
If $b$ is a power of 2, $b=2^m$, $m \geqslant 0$,
the amicability / anti-amicability graph $P_b$ of the matrices
$\{-1,1\}^{b \times b}$ contains a complete two-edge-coloured graph on $2 b^2$ vertices
with each vertex being a Hada\-mard matrix.
This graph is isomorphic to $\varGamma_{m,m}$, the amicability / anti-amicability graph of
the group $\G_{m,m}$.
\end{Lemma}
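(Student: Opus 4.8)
The plan is to exhibit an explicit embedding of $\varGamma_{m,m}$ into $P_b$ for $b = 2^m$, by mapping each of the $2b^2$ elements of the group $\G_{m,m}$ to a Hadamard matrix in $\{-1,1\}^{b \times b}$ in a way that is compatible with the amicability / anti-amicability relation. First I would recall the real monomial representation of $\R_{m,m}$ from Section 3 of \cite{Leo14Constructions}: the $4^m$ canonical basis matrices $A_1, \dots, A_{n^2}$ (with $n = 2^m$) are $\{-1,0,1\}$-monomial matrices of order $b = 2^m$ satisfying the relations \eqref{A-property-1}. The group $\G_{m,m}$ is, up to sign, the multiplicative closure of these basis matrices, so it has $2 \cdot 4^m = 2 b^2$ elements $\pm A_j$, each itself a signed monomial matrix.

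The key step is to convert monomial matrices into Hadamard matrices while preserving the commute/anticommute dichotomy. Using the transversal of $n = 2^m$ basis matrices from \eqref{A-property-2}, whose supports are disjoint and whose sum $S := \sum_{k=1}^n A_k$ lies in $\{-1,1\}^{b \times b}$, I would check that $S$ is itself a Hadamard matrix: $S S^T = \sum_{j,k} A_j A_k^T$, and the off-diagonal terms cancel in pairs because $A_j \ast A_k = 0$ forces $A_j A_k^T$ to have zero diagonal and the anti-amicable pairs contribute skew matrices, while $\sum_k A_k A_k^T = n I_{(b)}$. Then, for each group element $g = \pm A_j \in \G_{m,m}$, I define $\Rep(g) := g \cdot S$ (left multiplication by the monomial matrix $g$). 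Since $g$ is monomial with $g g^T = I_{(b)}$, the product $g S$ is again in $\{-1,1\}^{b\times b}$ and $(gS)(gS)^T = g S S^T g^T = n\, g g^T = n I_{(b)}$, so $\Rep(g)$ is a Hadamard matrix. Injectivity of $\Rep$ follows because $S$ is invertible, so $g \mapsto gS$ is a bijection onto its image, giving exactly $2b^2$ distinct Hadamard matrices.

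It remains to verify that $\Rep$ is a graph isomorphism onto the induced subgraph. Two group elements $g, h$ are adjacent in $\varGamma_{m,m}$ with colour $\lambda \in \{-1,1\}$ exactly when $g h^{-1}$ is symmetric ($\lambda = 1$) or skew ($\lambda = -1$); I would show $\Rep(g) \Rep(h)^{-1} = g S S^T h^T / n = g h^T = g h^{-1}$ (using $h h^T = I$ and that in $\G_{m,m}$ each element equals $\pm$ its own inverse up to the relevant sign), so the amicability type of the pair $(\Rep(g), \Rep(h))$ in $P_b$ is literally the same matrix symmetry condition that defines the edge colour in $\varGamma_{m,m}$. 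Hence edges and their colours correspond exactly in both directions, and $\Rep$ is an isomorphism of two-edge-coloured graphs. The main obstacle I anticipate is the bookkeeping in the second paragraph: confirming that $S S^T = n I_{(b)}$ genuinely follows from conditions \eqref{A-property-1}--\eqref{A-property-2} requires care, since one must track how the $\lambda_{j,k}$ signs interact with the disjoint-support condition to guarantee the off-diagonal cancellation — this is where the argument is most likely to need the detailed structure of the canonical basis rather than just its formal properties.
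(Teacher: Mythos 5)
Your overall strategy --- realizing $\varGamma_{m,m}$ inside $P_b$ by sending each signed monomial matrix $g \in \Rep(\G_{m,m})$ to $gH$ for a fixed Hadamard matrix $H$ of order $b$, and observing that $(gH)(hH)^T = b\,gh^{-1}$ reduces the amicability type of the image pair to the symmetric/skew dichotomy for $gh^{-1}$ --- is sound, and the injectivity and edge-colour verifications in your last paragraph are correct. (This paper only quotes the lemma from Theorem 10 of \cite{Leo14Constructions} without reproducing a proof, so the comparison here is with what your own argument needs.) The genuine gap is exactly where you anticipated it: the claim that $S = \sum_{k=1}^n A_k$ satisfies $SS^T = nI_{(b)}$ does not follow from \eqref{A-property-1}--\eqref{A-property-2}, and is false in general. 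The off-diagonal contribution is $\sum_{j\neq k} A_j A_k^T$; an anti-amicable pair indeed contributes $A_jA_k^T + A_kA_j^T = 0$, but an amicable pair contributes $2A_jA_k^T \neq 0$. Concretely, for $m=1$ the transversal $\{I,\oE_2\}$ satisfies \eqref{A-property-1} and \eqref{A-property-2}, yet $S = I + \oE_2$ is the all-ones matrix, which is singular. Worse, the natural repair --- choosing a transversal all of whose pairs are anti-amicable --- is impossible in precisely the range where this paper needs the lemma: such a transversal would be a clique of order $2^m$ in $\varDelta_m[-1]$, which Lemma~\ref{Red-clique-lemma} forbids for $m \geqslant 4$ because $\rho(2^m) < 2^m$.

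The fix is to decouple $H$ from the transversal entirely, since nothing in your final paragraph uses $H = S$, only $HH^T = bI_{(b)}$. Take $H$ to be the Sylvester matrix $\left[\begin{smallmatrix} 1 & 1 \\ 1 & -1 \end{smallmatrix}\right]^{\otimes m}$ (any Hadamard matrix of order $2^m$ would do). Then $gH \in \{-1,1\}^{b\times b}$ because $g$ is a signed permutation matrix; $(gH)(gH)^T = bI_{(b)}$, so each image vertex is Hadamard; the map $g \mapsto gH$ is injective on the $2b^2$ elements of $\Rep(\G_{m,m})$ because $H$ is invertible and the representation is faithful; and $(gH)(hH)^T = b\,gh^{-1}$ is $\pm$ a basis element of $\Rep(\G_{m,m})$, hence symmetric or skew, so the image is a complete two-edge-coloured subgraph of $P_b$ on which the colouring agrees with that of $\varGamma_{m,m}$. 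With that one substitution your argument is complete; as written, the construction of the Hadamard matrix $S$ is the step that fails.
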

The definitions of $\varGamma_{m,m}$ and $\G_{m,m}$
are given in Section 3 of the paper \cite{Leo14Constructions},
and the definition of $\G_{m,m}$ is repeated below.
For the current paper, it suffices to note that $\varDelta_m$ is a subgraph of $\varGamma_{m,m}$,
and so, therefore, are all of the transversal graphs.

An $n$-tuple of $A$ matrices of order $n=2^m$ satisfying properties \eqref{A-property-1} and \eqref{A-property-2}
yields a corresponding transversal graph $T$.
As noted in Section 5 of the paper  \cite{Leo14Constructions},
if Conjecture \ref{conjecture-3} were true, this would guarantee the existence
of an edge-colour complementary transversal graph $\overline{T}$.
In turn, because Lemma \ref{th-graph-image} guarantees the existence of a complete two-edge-coloured graph
isomorphic to $\varGamma_{m,m}$ within $P_b$, and because $\varDelta_m$ is a subgraph of $\varGamma_{m,m}$,
the graph $P_b$ would have to contain a two-edge-coloured subgraph isomorphic to $\overline{T}$.
This would imply the existence of an $n$-tuple of $B$ matrices of order $n$ satisfying the condition \eqref{constructions-4}
such that the construction (H0) would satisfy the Hada\-mard condition (H1), with a matrix of order $n^2$.

The author's subsequent paper on bent functions \cite{Leo15Twin}
refines Conjecture~\ref{conjecture-1} into the following question.
\begin{Question}
\label{Question-1}
Consider the sequence of edge-coloured graphs $\varDelta_m$ for $m \geqslant 1$,
each with red subgraph $\varDelta_m[-1],$ and blue subgraph $\varDelta_m[1].$
For which $m \geqslant 1$ is there an automorphism of $\varDelta_m$
that swaps the subgraphs $\varDelta_m[-1]$ and $\varDelta_m[1]$?
\end{Question}

The main result of this paper, Theorem \ref{HR-non-imomorphic-theorem},
leads to the resolution of these conjectures and this question.

\section{Further definitions and properties}
\label{sec-Preliminaries}
This section sets out the remainder of the definitions and properties used in this paper.
It is based on the previous papers \cite{Leo14Constructions, Leo15Twin} with additions.

\paragraph*{Clifford algebras and their real monomial representations.}
\label{sec-Clifford}

~

The following definitions and results appear in the paper on Hada\-mard matrices and Clifford algebras \cite{Leo14Constructions},
and are presented here for completeness, since they are used below.
Further details and proofs can be found in that paper, and in the paper on bent functions \cite{Leo15Twin},
unless otherwise noted.
An earlier paper on representations of Clifford algebras \cite{Leo05} contains more background material.

The signed group \cite{Cra95}
$\G_{p,q}$ of order $2^{1+p+q}$
is extension of $\Z_2$ by $\Z_2^{p+q}$,
defined by the signed group presentation
\begin{align*}
\G_{p,q} := \bigg\langle \
&\mf{e}_{\{k\}}\ (k \in S_{p,q})\ \mid
\\
&\mf{e}_{\{k\}}^2 = -1\ (k < 0), \quad \mf{e}_{\{k\}}^2 = 1\ (k > 0),
\\
&\mf{e}_{\{j\}}\mf{e}_{\{k\}} = -\mf{e}_{\{k\}}\mf{e}_{\{j\}}\ (j \neq k) \bigg\rangle,
\end{align*}
where $S_{p,q} := \{-q,\ldots,-1,1,\ldots,p\}.$
%

The $2 \times 2$ orthogonal matrices
\begin{align*}
\oE_1 :=
\left[
\begin{array}{cc}
0 & -1 \\
1 & 0
\end{array}
\right],
\quad
\oE_2 :=
\left[
\begin{array}{cc}
0 & 1 \\
1 & 0
\end{array}
\right]
\end{align*}
generate $\Rep(\G_{1,1}),$ the real monomial representation of group $\G_{1,1}.$
The cosets of $\{\pm I\} \equiv \Z_2$ in $\Rep(\G_{1,1})$ are
ordered using a pair of bits, as follows.
\begin{align*}
0 &\leftrightarrow 00 \leftrightarrow \{ \pm I \},
\\
1 &\leftrightarrow 01 \leftrightarrow \{ \pm \oE_1 \},
\\
2 &\leftrightarrow 10 \leftrightarrow \{ \pm \oE_2 \},
\\
3 &\leftrightarrow 11 \leftrightarrow \{ \pm \oE_1 \oE_2 \}.
\end{align*}

For $m > 1$,
the real monomial representation $\Rep(\G_{m,m})$ of the
group $\G_{m,m}$ consists of matrices of the form $G_1 \otimes G_{m-1}$
with $G_1$ in $\Rep(\G_{1,1})$ and $G_{m-1}$ in $\Rep(\G_{m-1,m-1}).$
The cosets of $\{\pm I\} \equiv \Z_2$ in $\Rep(\G_{m,m})$ are
ordered by concatenation of pairs of bits,
where each pair of bits uses the ordering as per $\Rep(\G_{1,1}),$
and the pairs are ordered as follows.
\begin{align*}
0 &\leftrightarrow 00 \ldots 00 \leftrightarrow \{ \pm I \},
\\
1 &\leftrightarrow 00 \ldots 01 \leftrightarrow \{ \pm I_{(2)}^{\otimes {(m-1)}} \otimes  \oE_1 \},
\\
2 &\leftrightarrow 00 \ldots 10 \leftrightarrow \{ \pm I_{(2)}^{\otimes {(m-1)}} \otimes  \oE_2 \},
\\
&\ldots
\\
2^{2m} - 1 &\leftrightarrow 11 \ldots 11 \leftrightarrow \{ \pm (\oE_1 \oE_2)^{\otimes {m}} \}.
\end{align*}
This ordering is called
the \emph{Kronecker product ordering} of the cosets of $\{\pm I\}$ in $\Rep(\G_{m,m}).$

The group $\G_{m,m}$ and its real monomial representation $\Rep(\G_{m,m})$
satisfy the following properties.
\begin{enumerate}
\item
Pairs of elements of $\G_{m,m}$ (and therefore $\Rep(\G_{m,m})$) either commute or anti\-commute:
for $g, h \in \G_{m,m},$ either $h g = g h$ or $h g = - g h.$
\item
The matrices $E \in \Rep(\G_{m,m})$ are orthogonal: $E E^T = E^T E = I.$
\item
The matrices $E \in \Rep(\G_{m,m})$ are either symmetric and square to give $I$ or
skew and square to give $-I$: either $E^T = E$ and $E^2 =I$ or $E^T = -E$ and $E^2 = -I.$
\end{enumerate}

Taking the positive signed element of each of the $2^{2m}$ cosets listed above
defines a transversal of $\{\pm I\}$ in $\Rep(\G_{m,m})$
which is also a monomial basis for the real representation of the Clifford algebra $\R_{m,m}$ in
Kronecker product order,
called this basis the \emph{positive signed basis} of $\Rep(\R_{m,m}).$

The function $\gamma_m : \Z_{2^{2 m}} \To \Rep(\G_{m,m})$
chooses the corresponding basis matrix from the positive signed basis of $\Rep(\R_{m,m}),$
using the Kronecker product ordering.
This ordering also defines a corresponding function on $\Z_2^{2 m},$
also called $\gamma_m.$

\paragraph*{Hurwitz-Radon theory.}
\label{sec-Hurwitz-Radon}
The key concept used in the proof of Lemma~\ref{Red-clique-lemma} below is that of a \emph{Hurwitz-Radon family} of matrices.
~

A set of real orthogonal matrices $\{A_1,A_2,\ldots,A_s\}$ is called a Hurwitz-Radon family
\cite{GerP74a,Hur22,Rad22} if
\begin{enumerate}
 \item
$A_j^T = -A_j$ for all $j=1,\ldots,s$, and
 \item
$A_j A_k = -A_k A_j$ for all $j \neq k$.
\end{enumerate}
The Hurwitz-Radon function $\rho$ is defined by
\begin{align*}
\rho(2^{4 d + c}) &:= 2^c + 8 d, \quad \text{where~} 0 \leqslant c < 4.
\end{align*}
As stated by Geramita and Pullman \cite{GerP74a}, Radon \cite{Rad22}
proved the following result, which is used as a lemma in this paper.
\begin{Lemma}\label{Hurwitz-Radon-lemma}
\cite[Theorem A]{GerP74a}

Any Hurwitz-Radon family of order $n$ has at most $\rho(n)-1$ members.
\end{Lemma}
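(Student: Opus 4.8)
The plan is to realise a Hurwitz-Radon family as a set of generators of a real Clifford algebra acting on $\R^n$, and then to extract the claimed bound from the structure theory of those algebras; since the Lemma only asserts an upper bound, no construction of large families is required.

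First I would observe that the two defining conditions already force $A_j^2 = -I$ for every $j$: orthogonality gives $A_j A_j^T = I$, and substituting $A_j^T = -A_j$ gives $-A_j^2 = I$. Combined with $A_j A_k = -A_k A_j$ for $j \neq k$, this shows that the assignment $\mf{e}_{\{-j\}} \mapsto A_j$ respects the defining relations of the signed group $\G_{0,s}$, hence extends to an $\R$-algebra homomorphism from the Clifford algebra $\R_{0,s}$ (the algebra on $s$ pairwise anticommuting generators, each squaring to $-1$) into $\R^{n \times n}$. Equivalently, $\R^n$ becomes a left module over $\R_{0,s}$.

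Next I would bring in the classical classification of the algebras $\R_{0,s}$. Each is semisimple and, by the period-$8$ isomorphism $\R_{0,s+8} \cong \R_{0,s} \otimes \R^{16 \times 16}$, is isomorphic to a full matrix algebra over $\R$, $\mb{C}$ or $\mb{H}$, or to a direct sum of two such; in every case each simple module has one and the same underlying real dimension $d_s$, where $d_s = 1, 2, 4, 4, 8, 8, 8, 8$ for $s = 0, 1, \ldots, 7$ and $d_{s+8} = 16\,d_s$. Since $\R^n$, as an $\R_{0,s}$-module, is a direct sum of simple modules, each of real dimension $d_s$, it follows that $d_s \mid n$.

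The one remaining ingredient is the purely arithmetic identity $\max\{\, s \geqslant 0 : d_s \mid n \,\} = \rho(n) - 1$, and this --- rather than anything conceptual --- is where the real work lies. Each $d_s$ is a power of $2$, so $d_s \mid n$ depends only on the $2$-adic valuation $v$ of $n$; writing $v = 4a + b$ with $0 \leqslant b < 4$, one compares the exponents $\log_2 d_s$ with the definition $\rho(2^{4a+b}) = 2^b + 8a$. The cases $v = 0, \ldots, 7$ are read directly off the table of the $d_s$, and the general case follows by induction on $a$, using $d_{s+8} = 16\,d_s$ on one side and $\rho(2^{v+4}) = \rho(2^v) + 8$ on the other. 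Assembling everything: if a Hurwitz-Radon family of order $n$ has $s$ members, then $d_s \mid n$, whence $s \leqslant \rho(n) - 1$. The step I expect to be the genuine obstacle is pinning down the values $d_s$, i.e.\ determining the isomorphism type of $\R_{0,s}$ modulo $8$; one can avoid the full classification by following Radon's original inductive route, which strips off eight generators at a time by means of the symmetric orthogonal involution $\omega := A_1 A_2 \cdots A_8$ (it commutes with $A_9, \ldots, A_s$ and anticommutes with $A_1, \ldots, A_8$) and the corresponding $\R_{0,8}$-action on $\R^n$, but this merely re-derives the same periodicity by hand.
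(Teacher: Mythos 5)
The paper offers no proof of this Lemma at all: it is quoted, with attribution, as Theorem~A of Geramita and Pullman, i.e.\ as Radon's classical 1922 theorem, and is used strictly as a black box in the proof of Lemma~\ref{Red-clique-lemma}. So there is no internal argument to compare yours against; what you have written is a (correct) proof of the classical result itself. Your outline is the standard modern route: the computation $A_j^2=-I$ from skewness plus orthogonality is right, the universal property does give an algebra map $\R_{0,s}\To\R^{n\times n}$ and hence an $\R_{0,s}$-module structure on $\R^n$, semisimplicity together with the fact that all simple modules of $\R_{0,s}$ share the dimension $d_s$ yields $d_s\mid n$, and your table $d_s=1,2,4,4,8,8,8,8$ with $d_{s+8}=16\,d_s$, combined with $\rho(2^{v+4})=\rho(2^v)+8$, does verify the arithmetic identity $\max\{s: d_s\mid n\}=\rho(n)-1$ (the reduction to the $2$-adic valuation is legitimate since each $d_s$ is a power of $2$). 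The one honest caveat, which you yourself flag, is that the period-$8$ classification of the algebras $\R_{0,s}$ over $\R$, $\mb{C}$ and $\mb{H}$ is a theorem of comparable weight to the one being proved, so your argument is a reduction to standard Clifford-algebra structure theory rather than a self-contained proof; that is entirely consistent with how the paper treats the Lemma (as an imported classical fact), and it is a pleasant irony that the natural proof of the imported fact runs through the same Clifford-algebra machinery the paper is built on, as in Yiu's work cited in the Discussion.
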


\paragraph*{The two sequences of bent functions.}
\label{sec-Bent}

~

The previous two papers \cite{Leo14Constructions,Leo15Twin}
define two binary functions  on $\Z_2^{2 m}$, $\sigma_m$ and $\tau_m$, respectively.
Their key properties are repeated below.
See the two papers for the proofs and for more details and references on bent functions.

The function $\sigma_m : \Z_2^{2 m} \To \Z_2$ has the following properties.
\begin{enumerate}
 \item
For $i \in \Z_2^{2m},$ $\sigma_m(i) = 1$ if and only if the number of
digits equal to 1 in  the base 4 representation of $i$ is odd.
 \item
Since each matrix $\gamma_m(i)$ is orthogonal, $\sigma_m(i) = 1$ if and only if the matrix $\gamma_m(i)$ is skew.
 \item
The function $\sigma_m$ is bent.
\end{enumerate}

The function $\tau_m : \Z_2^{2 m} \To \Z_2$ has the following properties.
\begin{enumerate}
 \item
For $i \in \Z_2^{2m},$ $\tau_m(i) = 1$ if and only if the number of digits equal to 1 or 2 in the base 4
representation of $i$ is non zero, and the number of digits equal to 1 is even.
 \item
The value $\tau_m(i) = 1$ if and only if the matrix $\gamma_m(i)$ is symmetric but not diagonal.
 \item
The function $\tau_m$ is bent.
\end{enumerate}

\paragraph*{The relevant graphs.}
\label{sec-Graphs}

~

For a binary function $f : \Z_2^{2 m} \To \Z_2$, with $f(0)=0$ we consider the simple undirected \emph{Cayley graph} $\Cay(f)$  \cite[3.1]{BerC99}
where the vertex set $V(\Cay(f)) = \Z_2^{2 m}$ and for $i,j \in \Z_2^{2 m}$, the edge $(i,j)$ is in the edge set $E(\Cay(f))$ if and only if $f(i+j)=1$.

In the paper on Hada\-mard matrices \cite{Leo14Constructions} it is shown that
since $\sigma_m(i)=1$ if and only if $\gamma_m(i)$ is skew,
the subgraph $\varDelta_m[-1]$ is isomorphic to the Cayley graph $\Cay(\sigma_m)$.

The paper on bent functions \cite{Leo15Twin} notes that
since $\tau_m(i) = 1$ if and only if $\gamma_m(i)$ is symmetric but not diagonal,
the subgraph $\varDelta_m[1]$ is isomorphic to the Cayley graph $\Cay(\tau_m)$.
In that paper, these isomorphisms and the characterization of $\Cay(\sigma_m)$ and $\Cay(\tau_m)$
as Cayley graphs of bent functions are used to prove the following theorem.
\begin{Theorem}\label{Twins-are-strongly-regular-theorem}
\cite[Theorem 5.2]{Leo15Twin}

For all $m \geqslant 1,$
both graphs $\varDelta_m[-1]$ and $\varDelta_m[1]$ are strongly regular, with parameters
$v_m = 4^m,$ $k_m = 2^{2 m - 1} - 2^{m - 1},$ $\lambda_m=\mu_m=2^{2 m - 2} - 2^{m - 1}.$
\end{Theorem}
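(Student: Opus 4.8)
The plan is to reduce both assertions to a single statement about Cayley graphs of bent functions, and then to read off the parameters from a short autocorrelation computation. By the isomorphisms recorded above, $\varDelta_m[-1] \isomorphic \Cay(\sigma_m)$ and $\varDelta_m[1] \isomorphic \Cay(\tau_m)$, so it suffices to prove the following general fact: if $f : \Z_2^{2m} \To \Z_2$ is bent with $f(0) = 0$ and $\abs{D_f} = k_m$, where $D_f := \{x : f(x) = 1\}$, then $\Cay(f)$ is strongly regular with parameters $(v_m, k_m, \lambda_m, \lambda_m)$. Both $\sigma_m$ and $\tau_m$ satisfy these hypotheses: they are bent by assumption, $f(0)=0$ in each case, and a direct parity count of the skew basis matrices (respectively the symmetric non-diagonal basis matrices) gives $\abs{D_{\sigma_m}} = \abs{D_{\tau_m}} = 2^{2m-1} - 2^{m-1} = k_m$, in agreement with the stated difference-set parameters.

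First I would record the combinatorial reduction. As a Cayley graph on the abelian group $\Z_2^{2m}$, the graph $\Cay(f)$ is vertex-transitive, hence regular of degree $\abs{D_f} = k_m$, and it has $v_m = 4^m$ vertices. For distinct vertices $i, j$ put $x := i + j \neq 0$. A vertex $\ell$ is a common neighbour of $i$ and $j$ exactly when $f(\ell + i) = f(\ell + j) = 1$; substituting $y := \ell + i$ and using $-x = x$ in $\Z_2^{2m}$, the number of common neighbours equals $N(x) := \abs{D_f \cap (D_f + x)}$, a quantity depending only on $x$. Adjacency of $i, j$ means $x \in D_f$ and non-adjacency means $x \notin D_f$, so strong regularity with $\lambda = \mu$ is precisely the claim that $N(x)$ takes one common value over all $x \neq 0$.

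The heart of the argument is to show that $N$ is constant on nonzero $x$ and to identify the constant. I would introduce the autocorrelation
\[
C(x) := \sum_{y \in \Z_2^{2m}} (-1)^{f(y) + f(y + x)},
\]
and partition the sum according to the four joint values of $(f(y), f(y+x))$. Writing $v := 4^m$ and $k := \abs{D_f}$, the four cell counts are determined by $N(x)$ together with the marginals (each of $f(y)=1$ and $f(y+x)=1$ occurs $k$ times), and solving the resulting linear relations yields the identity
\[
C(x) = v - 4k + 4\,N(x), \qquad\text{so}\qquad N(x) = k - \tfrac{v}{4} + \tfrac{1}{4}\,C(x).
\]
The bent hypothesis enters through the Walsh transform $W_f(a) := \sum_x (-1)^{f(x) + a \cdot x}$, which satisfies $W_f(a)^2 = 2^{2m}$ for every $a$. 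Expanding $\sum_a W_f(a)^2 (-1)^{a \cdot x}$ and collapsing the inner character sum gives the Wiener--Khinchin identity $2^{2m}\,C(x) = \sum_a W_f(a)^2 (-1)^{a \cdot x}$; for a bent $f$ this reduces to $C(x) = \sum_a (-1)^{a \cdot x}$, which is $0$ for every $x \neq 0$. Hence $N(x) = k - \tfrac{v}{4}$ for all $x \neq 0$.

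Finally I would substitute the numerical data. With $k = k_m = 2^{2m-1} - 2^{m-1}$ and $v = 4^m$ one computes $k - v/4 = 2^{2m-2} - 2^{m-1} = \lambda_m$, so every pair of distinct vertices --- adjacent or not --- has exactly $\lambda_m$ common neighbours. This is exactly strong regularity with parameters $(v_m, k_m, \lambda_m, \lambda_m)$, and applying the general fact to $f = \sigma_m$ and to $f = \tau_m$ proves the theorem for both $\varDelta_m[-1]$ and $\varDelta_m[1]$. I expect the main obstacle to be the Fourier step: verifying the Wiener--Khinchin identity on $\Z_2^{2m}$ and using the bent condition $W_f(a)^2 = 2^{2m}$ to force $C(x) = 0$ for $x \neq 0$. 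The remaining steps --- the elementary cell-counting that produces $C(x) = v - 4k + 4\,N(x)$ and the arithmetic substitution --- are routine.
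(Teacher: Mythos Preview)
Your argument is correct and matches the route the paper points to. The paper does not prove this theorem in-text but cites \cite[Theorem 5.2]{Leo15Twin}, indicating that the proof goes via the isomorphisms $\varDelta_m[-1]\isomorphic\Cay(\sigma_m)$, $\varDelta_m[1]\isomorphic\Cay(\tau_m)$ together with the characterization of Cayley graphs of bent functions as strongly regular with $\lambda=\mu$ (the Bernasconi--Codenotti result \cite[Lemma~12]{BerC99} mentioned in Section~\ref{sec-Discussion}); your autocorrelation/Wiener--Khinchin computation is exactly a self-contained derivation of that lemma, followed by the same parameter substitution.
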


\section{Proof of Theorem \ref{HR-non-imomorphic-theorem} and related results}
\label{sec-Results}
Here we prove the main result, and examine its implications for Conjectures~\ref{conjecture-1} to~\ref{conjecture-3} and Question~\ref{Question-1}.

The proof of Theorem~\ref{HR-non-imomorphic-theorem} follows from the following two lemmas.
The first lemma puts an upper bound on the clique number of the graph $\Cay(\sigma_m) \isomorphic \varDelta_m[-1]$.
\begin{Lemma}
\label{Red-clique-lemma}
The clique number of the graph $\Cay(\sigma_m)$ is at most $\rho(2^m)$,
where $\rho$ is the Hurwitz-Radon function.
Therefore $\rho(2^m) < 2^m$ for $m \geqslant 4$.
\end{Lemma}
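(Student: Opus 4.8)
The plan is to connect cliques in $\Cay(\sigma_m)$ directly to Hurwitz-Radon families, and then invoke Lemma~\ref{Hurwitz-Radon-lemma}. Recall that $\Cay(\sigma_m) \isomorphic \varDelta_m[-1]$, whose vertices are the $4^m$ positive signed basis matrices of $\Rep(\R_{m,m})$ (all of order $2^m$), and whose edges join pairs $A_j, A_k$ that have disjoint support and are anti-amicable, i.e.\ $A_j A_k^{-1}$ is skew. First I would take a clique $\{A_0, A_1, \ldots, A_{s-1}\}$ in this graph; by relabelling we may use the vertex $I$ if it lies in the clique, but in general we should not assume that. The key manipulation is to left-multiply every vertex in the clique by $A_0^{-1} = A_0^T$, i.e.\ pass to the matrices $B_j := A_0^{-1} A_j$ for $j = 0,\ldots,s-1$, so that $B_0 = I$. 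Each $B_j$ is still orthogonal (a product of orthogonal matrices), and for $j \neq k$ we have $B_j B_k^{-1} = A_0^{-1} A_j A_k^{-1} A_0$, which is skew precisely because $A_j A_k^{-1}$ is skew and conjugation by an orthogonal matrix preserves skewness. In particular, taking $k = 0$: $B_j = B_j B_0^{-1}$ is skew for each $j = 1,\ldots,s-1$, so $B_j^T = -B_j$, which gives condition~(1) of a Hurwitz-Radon family.

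Next I would verify condition~(2), that the $B_j$ for $j=1,\ldots,s-1$ pairwise anticommute. Here is where the structure of $\Rep(\G_{m,m})$ enters: any two elements of $\Rep(\G_{m,m})$ either commute or anticommute (property~1 of the representation), and each element is orthogonal and either symmetric with square $I$ or skew with square $-I$ (properties~2 and~3). Each $B_j = A_0^{-1}A_j = A_0^T A_j$ is, up to sign, again an element of the monomial group $\Rep(\G_{m,m})$, so this dichotomy applies to the $B_j$. For $j, k \geqslant 1$ with $j \neq k$, the matrix $B_j B_k^{-1} = B_j B_k^T = -B_j B_k$ is skew, so $(B_j B_k)^T = -B_j B_k$; but also $(B_j B_k)^T = B_k^T B_j^T = (-B_k)(-B_j) = B_k B_j$. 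Hence $B_k B_j = -B_j B_k$, which is exactly condition~(2). Therefore $\{B_1, \ldots, B_{s-1}\}$ is a Hurwitz-Radon family of order $2^m$, so by Lemma~\ref{Hurwitz-Radon-lemma} we get $s - 1 \leqslant \rho(2^m) - 1$, i.e.\ $s \leqslant \rho(2^m)$. Taking the supremum over all cliques gives the clique number bound.

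For the final inequality, write $2^m = 2^{4d+c}$ with $0 \leqslant c < 4$, so $\rho(2^m) = 2^c + 8d$; I would then just compare $2^c + 8d$ with $2^{4d+c}$ and check that the latter strictly dominates as soon as $m \geqslant 4$ (equivalently $4d + c \geqslant 4$), handling the few small cases $d = 0$ directly and noting the exponential-versus-linear growth in $d$ otherwise.

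The main obstacle I anticipate is the bookkeeping around \emph{disjoint support}: the clique condition in $\varDelta_m[-1]$ demands not only anti-amicability but also that the $A_j$ have pairwise disjoint support, whereas the Hurwitz-Radon conclusion only needs the skew-symmetry and anticommutation. This direction is harmless --- we are throwing away a hypothesis --- but I want to make sure that after left-multiplying by $A_0^{-1}$ the $B_j$ are still genuinely (signed) elements of $\Rep(\G_{m,m})$ so that properties~1--3 apply; this is where one uses that the positive signed basis is closed under multiplication up to sign, i.e.\ that $\Rep(\G_{m,m})$ is a group. A secondary subtlety is confirming that no $B_j$ with $j \geqslant 1$ equals $\pm I$ (which would violate skewness) --- but that is automatic, since $B_j$ skew forces $B_j \neq \pm I$, and distinctness of the $A_j$ forces $B_j \neq B_0 = I$.
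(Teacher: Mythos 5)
Your proposal is correct and follows essentially the same route as the paper: normalize the clique so that it contains the identity, observe that the remaining vertices form a Hurwitz--Radon family of skew, pairwise anticommuting orthogonal matrices, and apply Radon's bound. The only (immaterial) difference is that you normalize by left-multiplying by $A_0^{-1}$ at the matrix level, whereas the paper translates the clique by a constant in $\Z_2^{2m}$ before applying $\gamma_m$.
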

\begin{proof}
If we label the vertices of the graph $\Cay(\sigma_m)$ with the elements of $Z_2^{2m}$,
then any clique in this graph is mapped to another clique if a constant is added to all of the vertices.
Thus without loss of generality we can assume that we have a clique of order $s+1$ with one of the vertices labelled by 0.
If we then use $\gamma_m$ to label the vertices with elements of $\R_{m,m}$ to obtain the isomorphic graph $\varDelta_m[-1]$,
we have one vertex of the clique labelled with the identity matrix $I$ of order $2^m$.
Since the clique is in $\varDelta_m[-1]$, the other vertices $A_1$ to $A_s$ (say) must necessarily be skew matrices
that are pairwise anti-amicable,
\begin{align*}
A_j A_k^T &= -A_k A_j^T\quad\text{for all~} j \neq k.
\intertext{But then}
A_j A_k &= -A_k A_j\quad\text{for all~} j \neq k,
\end{align*}
and therefore $\{A_1,\ldots,A_s\}$ is a Hurwitz-Radon family.
By Lemma~\ref{Hurwitz-Radon-lemma}, $s$ is at most $\rho(2^m)-1$ and therefore the size of the clique is at most
$\rho(2^m)$.
\end{proof}

The second lemma puts a lower bound on the clique number of the graph $\Cay(\tau_m) \isomorphic \varDelta_m[1]$.
\begin{Lemma}
\label{Blue-clique-lemma}
The clique number of the graph $\Cay(\tau_m)$ is at least $2^m$.
\end{Lemma}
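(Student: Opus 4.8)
The plan is to exhibit, explicitly, a single clique of $2^m$ vertices in the graph $\Cay(\tau_m) \isomorphic \varDelta_m[1]$; since the lemma asserts only a lower bound, one well-chosen clique is enough. The clique I would take is the subset $S \subseteq \Z_2^{2m}$ of those $i$ whose base-$4$ representation uses only the digits $0$ and $2$ (equivalently: within each of the $m$ pairs of bits composing $i$, the low-order bit is zero). This $S$ is a subgroup of $\Z_2^{2m}$ of order $2^m$.

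First I would note that $S$ is closed under addition. Then, for any two distinct $i, j \in S$, the sum $i+j$ again lies in $S$ and is nonzero, so its base-$4$ representation has at least one digit equal to $2$ and no digit equal to $1$. By the first listed property of $\tau_m$ this forces $\tau_m(i+j) = 1$, that is, $(i,j) \in E(\Cay(\tau_m))$. Hence every pair of distinct elements of $S$ is an edge, so the clique number of $\Cay(\tau_m)$ is at least $|S| = 2^m$.

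Equivalently, and more in the spirit of the paper's setup, one can run the argument on the matrix side: under $\gamma_m$ the set $S$ corresponds to the $2^m$ Kronecker products $M_1 \otimes \cdots \otimes M_m$ with each $M_j \in \{I_{(2)}, \oE_2\}$. Because $\oE_2^T = \oE_2$ and $\oE_2^2 = I_{(2)}$, any two such matrices $A, B$ commute, so $A B^{-1} = A B^T = A B$ is symmetric and $A, B$ are amicable; and if $A \neq B$ they differ in some tensor factor, where $I_{(2)}$ and $\oE_2$ have disjoint support, so $A$ and $B$ have disjoint support. Thus these $2^m$ matrices form an all-blue transversal graph inside $\varDelta_m$, giving the same bound. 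I do not expect a genuine obstacle here once $S$ has been identified; the only thing needing care is the bookkeeping relating base-$4$ digits, pairs of bits, and the symmetry and diagonality of the corresponding basis matrices. Together with Lemma~\ref{Red-clique-lemma}, which bounds the clique number of $\Cay(\sigma_m)$ above by $\rho(2^m)$, and the fact that $\rho(2^m) < 2^m$ for $m \geqslant 4$, this shows $\Cay(\sigma_m)$ and $\Cay(\tau_m)$ are not isomorphic for every $m \geqslant 4$, leaving only the cases $m = 1, 2, 3$ to be examined directly.
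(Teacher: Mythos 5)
Your proposal is correct and is essentially the paper's own proof: the paper exhibits exactly the same clique, namely the subgroup of $\Z_2^{2m}$ whose elements have only the digits $0$ and $2$ in base $4$, and notes that closure under addition plus the digit characterization of $\tau_m$ makes it a clique of order $2^m$. Your additional matrix-side verification via Kronecker products of $I_{(2)}$ and $\oE_2$ is a harmless elaboration of the same idea.
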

\begin{proof}
We construct a clique of order $2^m$ in $\Cay(\tau_m)$ with the vertices labelled in $\Z_2^{2m}$,
using the following set of vertices, denoted in base 4:
\begin{align*}
C_m &:= \{ 00 \ldots 00, 00 \ldots 02, 00 \ldots 20, \ldots, 22 \ldots 22 \}.
\end{align*}
The set $C_m$ is closed under addition in $\Z_2^{2 m}$,
and therefore forms a clique of order $2^m$ in $\Cay(\tau_m)$,
since the sum of any two distinct elements of $C_m$ is in the support of $\tau_m$.
\end{proof}

With these two lemmas in hand, the proof of Theorem~\ref{HR-non-imomorphic-theorem} follows easily.

\begin{proofof}{Theorem~\ref{HR-non-imomorphic-theorem}}
The result is a direct consequence of Lemmas~\ref{Red-clique-lemma} and~\ref{Blue-clique-lemma}.
For $m \geqslant 4$, the clique numbers of the graphs $\Cay(\sigma_m)$ and $\Cay(\tau_m)$ are different,
and therefore these graphs cannot be isomorphic.
\end{proofof}

Lemmas~\ref{Red-clique-lemma} and~\ref{Blue-clique-lemma}, along with Theorem~\ref{HR-non-imomorphic-theorem}
imply the failure of the conjectures~\ref{conjecture-1} to~\ref{conjecture-3}, as well as the resolution of Question~\ref{Question-1}, as follows.
\begin{Theorem}
\label{Conjectures-are-false-theorem}
For $m \geqslant 4$ the following hold.
\begin{enumerate}
 \item
There exist transversal graphs that do not have an edge-colour complement, and
therefore Conjecture~\ref{conjecture-3} does not hold.
\item
As a consequence, Conjectures~\ref{conjecture-1} and~\ref{conjecture-2} also do not hold.
\item
Question~\ref{Question-1} is resolved.
The only $m \geqslant 1$ for which there is an automorphism of $\varDelta_m$
that swaps the subgraphs $\varDelta_m[-1]$ and $\varDelta_m[1]$
are $m=1,2$ and $3$.
\end{enumerate}

\end{Theorem}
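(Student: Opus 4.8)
The plan is to obtain all three parts as essentially formal consequences of Lemmas~\ref{Red-clique-lemma} and~\ref{Blue-clique-lemma}, Theorem~\ref{HR-non-imomorphic-theorem}, and the implications among the conjectures already recorded after the statement of Conjecture~\ref{conjecture-3}; the only input that is not purely structural is a finite verification for $m=1,2,3$ needed in part~3. For part~1, I would start from the clique $C$ of order $2^m$ in $\Cay(\tau_m)$ constructed in the proof of Lemma~\ref{Blue-clique-lemma}. Transporting $C$ along the isomorphism $\Cay(\tau_m)\isomorphic\varDelta_m[1]$ recalled in Section~\ref{sec-Graphs}, the corresponding $2^m$ vertices of $\varDelta_m$ are pairwise joined by edges labelled $1$; hence the induced subgraph of $\varDelta_m$ on these vertices is a complete graph on $2^m$ vertices, i.e. a transversal graph $T$, and every edge of $T$ is blue. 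A transversal graph with edge colours complementary to those of $T$ would be a complete graph on $2^m$ vertices all of whose edges are red, equivalently a clique of order $2^m$ in $\varDelta_m[-1]\isomorphic\Cay(\sigma_m)$. By Lemma~\ref{Red-clique-lemma} the clique number of $\Cay(\sigma_m)$ is at most $\rho(2^m)$, and $\rho(2^m)<2^m$ for $m\geqslant 4$, so no such complementary transversal graph can exist. This exhibits, for each $m\geqslant 4$, a transversal graph with no edge-colour complement, which is exactly the failure of Conjecture~\ref{conjecture-3}.

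Part~2 is then a one-line deduction: since Conjecture~\ref{conjecture-1} implies Conjecture~\ref{conjecture-2}, which implies Conjecture~\ref{conjecture-3}, taking contrapositives shows that the failure of Conjecture~\ref{conjecture-3} for $m\geqslant 4$ forces the failure of Conjectures~\ref{conjecture-2} and~\ref{conjecture-1} for those $m$. For part~3, the key observation is that an automorphism of $\varDelta_m$ swapping $\varDelta_m[-1]$ and $\varDelta_m[1]$ is precisely a permutation $\pi$ of the $4^m$ basis matrices of the kind sought in Conjecture~\ref{conjecture-1}: such a $\pi$ maps edges to edges in both directions and therefore also non-edges to non-edges, so it is a graph automorphism of $\varDelta_m$, and conversely any colour-swapping automorphism is such a $\pi$. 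For $m\geqslant 4$, a colour-swapping automorphism would restrict to a graph isomorphism $\varDelta_m[-1]\isomorphic\varDelta_m[1]$, hence to an isomorphism $\Cay(\sigma_m)\isomorphic\Cay(\tau_m)$, contradicting Theorem~\ref{HR-non-imomorphic-theorem}. For $m=1,2,3$, I would invoke the explicit colour-swapping automorphisms of $\varDelta_m$ already exhibited by direct computation in \cite{Leo14Constructions,Leo15Twin} (equivalently, the verification of Conjecture~\ref{conjecture-1} in those small cases). Combining the two ranges resolves Question~\ref{Question-1} as stated.

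I do not expect a serious obstacle: granting the two clique-number lemmas and Theorem~\ref{HR-non-imomorphic-theorem}, the argument is mostly bookkeeping, translating between the Cayley-graph descriptions $\Cay(\sigma_m),\Cay(\tau_m)$ and the subgraphs $\varDelta_m[-1],\varDelta_m[1]$, and checking that the all-blue clique really is an induced complete subgraph of $\varDelta_m$ on $2^m$ vertices. The one step that cannot be extracted from the results stated earlier is the affirmative side of part~3 for $m\leqslant 3$, which has to be supplied as (or cited from) an explicit finite computation rather than a structural argument.
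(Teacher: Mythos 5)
Your proposal is correct and follows essentially the same route as the paper: the all-blue transversal graph of order $2^m$ from Lemma~\ref{Blue-clique-lemma} cannot have an edge-colour complement because Lemma~\ref{Red-clique-lemma} bounds red cliques by $\rho(2^m)<2^m$, and the remaining parts follow from the implication chain among the conjectures and from Theorem~\ref{HR-non-imomorphic-theorem}. The only (harmless) differences are that the paper derives the failure of Conjecture~\ref{conjecture-1} directly from the non-isomorphism in Theorem~\ref{HR-non-imomorphic-theorem} rather than via the contrapositive chain, and that you are slightly more explicit than the paper's proof about citing the finite verification for $m=1,2,3$, which the paper relegates to the Discussion.
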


\begin{proof}
Assume that $m \geqslant 4$.
A transversal graph is a subgraph of $\varDelta_m$ which is a complete graph of order $2^m$.
The edges of a transversal graph are labelled with the colour red (if the edge is contained in $\varDelta_m[-1]$) or blue
(if the edge is contained in $\varDelta_m[1]$).
By Lemma~\ref{Red-clique-lemma}, the largest clique of $\varDelta_m[-1]$ is of order $\rho(2^m) < 2^m$,
and by Lemma~\ref{Blue-clique-lemma}, the largest clique of $\varDelta_m[1]$ is of order $2^m$.
If we take a blue clique of order $2^m$ as a transversal graph, this cannot have an edge-colour complement
in $\varDelta_m$, because no red clique can be this large.
More generally, we need only take a transversal graph containing a blue clique with order larger than $\rho(2^m)$ to have a clique with no edge-colour complement
in $\varDelta_m$.
This falsifies Conjecture~\ref{conjecture-3}.

Since Conjecture~\ref{conjecture-3} fails for $m \geqslant 4$,
the pairing of graphs described in Conjecture~\ref{conjecture-2} is impossible for $m \geqslant 4$.
Thus Conjecture~\ref{conjecture-2} is also false.

Finally, Conjecture~\ref{conjecture-1} fails as a direct consequence of Theorem~\ref{HR-non-imomorphic-theorem}
since, for $m \geqslant 4$, the subgraphs $\varDelta_m[-1]$ and $\varDelta_m[1]$are not isomorphic.
Therefore, for $m \geqslant 4$,  there can be no automorphism of $\varDelta_m$ that swaps these subgraphs.
\end{proof}

\section{Discussion}
\label{sec-Discussion}
The result of Lemma~\ref{Red-clique-lemma} is well known.
For example, the graph $\varDelta_m[-1]$ is the complement of the graph $V^+$ of Yiu \cite{Yiu90},
and the result for $V^+$ in his Theorem 2 is equivalent to Lemma \ref{Red-clique-lemma}.

The main consequence of Theorem~\ref{Conjectures-are-false-theorem} is that for $m>3$ there is at least one $n$-tuple of $A$ matrices,
with $n=2^m$ such that no $n$-tuple of $B$ matrices of order $n$ can be found to satisfy construction (H0) under condition (H1).
The proof of Theorem 5 of the Hada\-mard construction paper \cite{Leo14Constructions} shows by construction that for any $m$,
and any $n$-tuple of $A$ matrices satisfying \eqref{constructions-4}, there is an $n$-tuple of $B$ matrices of order $nc$ that
satisfies construction (H0) under condition (H1), where $c=M(n-1)$, with
\begin{align}
M(q) &:=
\begin{cases}
\lceil \frac{q}{2} \rceil + 1, \quad \text{if~} q \equiv 2,3,4 \quad (\operatorname{mod} 8),
\\
\lceil \frac{q}{2} \rceil \quad \text{otherwise.}
\end{cases}
\label{M-def}
\end{align}
Thus Theorem 5 remains valid.
The question remains as to whether the the order $nc$ is tight or can be reduced.
In the special case where the $n$-tuple of $A$ matrices is mutually amicable, the answer is given by Corollary 15 of the paper \cite{Leo14Constructions}:
The set of $\{-1,1\}$ matrices of order $c$ contains an $n$-tuple of mutually anti-amicable Hada\-mard matrices.
So in this special case, the required order can be reduced from $nc$ to $c$.
This leads to the following question.
\begin{Question}
In the general case, for any $m>1$, $n=2^m$, for any $n$-tuple of $A$ matrices satisfying \eqref{constructions-4},
does there always exist an $n$-tuple of $B$ matrices of order $c$ that
satisfies construction (H0) under condition (H1), where $c=M(n-1)$, with $M$ defined by \eqref{M-def}?
\end{Question}

As a result of Theorems~\ref{Twins-are-strongly-regular-theorem} and \ref{Conjectures-are-false-theorem},
we see that we have two sequences of strongly regular graphs, $\varDelta_m[-1]$ and $\varDelta_m[1]$ ($m \geqslant 1$),
sharing the same parameters,
$v_m = 4^m,$ $k_m = 2^{2 m - 1} - 2^{m - 1},$ $\lambda_m=\mu_m=2^{2 m - 2} - 2^{m - 1},$
but the graphs are isomorphic only for $m=1, 2, 3$.
For these three values of $m$, the existence of
automorphisms of $\varDelta_m$ that swap $\varDelta_m[-1]$ and $\varDelta_m[1]$
as subgraphs \cite[Table 1]{Leo14Constructions}
is remarkable in the light of Theorem~\ref{Conjectures-are-false-theorem}.

A paper of Bernasconi and Codenotti describes the relationship between bent functions and
their Cayley graphs, implying that a bent function corresponding to a $(v,k,\lambda,n)$ Hada\-mard difference set has a Cayley graph
that is strongly regular with parameters $(v,k,\lambda,\mu)$ where $\lambda=\mu$~\cite[Lemma 12]{BerC99}.
The current paper notes that for two specific sequences of bent functions, $\sigma_m$ and $\tau_m$,
the corresponding Cayley graphs are not necessarily isomorphic.

This raises the subject of classifying bent functions via their Cayley graphs, raising the following questions.
\begin{Question}
Which strongly regular graphs with parameters $(v,k,\lambda,\lambda)$ occur as Cayley graphs of bent functions?
\end{Question}
\begin{Question}
What is the relationship between other classifications of bent functions and the classification via Cayley graphs?
\end{Question}
This classification is the topic of a paper in preparation \cite{Leo16Classifying}.

With respect to the specific bent functions $\sigma_m$ and $\tau_m$ investigated here,
one of the anonymous reviewers of an earlier draft of this paper has asked whether each of these
functions are part of a larger class of bent functions.

The function $\sigma_m$ is a quadratic form, as can be seen from its definition and
its recursive identity \cite[Lemma 7]{Leo14Constructions}.
Specifically, $\sigma_m(0)=0$,
and, in terms of algebraic normal form,
using a particular convention for the mapping of bits to Boolean
variables, the identity is $\sigma_1(x_0,x_1)=x_0 x_1 + x_0$, and
\begin{align*}
\sigma_{m+1}(x_0,x_1,&\ldots,x_{2m},x_{2m+1})
=
\sigma_m(x_0,x_1) +
\sigma_m(x_2,x_3,\ldots,x_{2m},x_{2m+1})
\\
&= x_0 x_1 + x_0 + x_2 x_3 + x_2 + \ldots + x_{2m} x_{2m+1} + x_{2m}.
\end{align*}

In a paper in preparation \cite{Leo16Classifying},
it is proven that all quadratic bent functions with
the same dimension and weight have isomorphic Cayley graphs.

As for $\tau_m$, it is a \emph{bent iterative} function
\cite[Theorem~V.4]{CanCCP01cryptographic} \cite[Theorem~2]{CanC03decomposing} \cite{Tok11number},
as can be seen from its definition, and from the proof that it is a bent function
\cite[Theorem 3.1]{Leo15Twin}.

Since the $\mathcal{PS}^{(-)}$ \emph{partial spread}
bent functions are formed using $m$-di\-mensional subspaces of $\Z^{2m}$ which are disjoint except
for the $0$ vector \cite[p. 95]{Dil74},
these bent functions also have Cayley graphs whose clique number is at least $2^m$.
It could therefore be speculated that $\tau_m$ is also a $\mathcal{PS}^{(-)}$ bent function,
but exhaustive search using SageMathCloud \cite{SageMathCloud} shows that $\tau_3$ cannot be in
$\mathcal{PS}^{(-)}$.
Each clique of size 8 in $\Cay(\tau_3)$ that contains the 0 vector intersects each other such
clique at two vectors, only one of which is the 0 vector \cite{Leo16SMC}.

\paragraph*{Acknowledgements.}

Thanks to Christine Leopardi for her hospitality at Long Beach.
Thanks to Robert Craigen, Joanne Hall, William Martin,
Padraig {\'O} Cath{\'a}in and Judy-anne Osborn for valuable discussions.
This work was begun in 2014 while the author was a Visiting Fellow at the Australian National University,
continued while the author was a Visiting Fellow and a Casual Academic at the University of Newcastle, Australia,
and concluded while the author was an employee of the Bureau of Meteorology of the Australian
Government, and an Honorary Fellow of the University of Melbourne.
Thanks also to the anonymous reviewers of previous drafts of this paper.






\end{document}